\documentclass[12pt,a4paper]{amsart}
\usepackage{amsfonts,color}
\usepackage{amsthm}
\usepackage{amsmath}
\usepackage{amscd}
\usepackage{amssymb}
\usepackage[latin2]{inputenc}
\usepackage{t1enc}
\usepackage[mathscr]{eucal}
\usepackage{indentfirst}
\usepackage{graphicx}
\usepackage{graphics}
\usepackage{pict2e}
\usepackage{epic}
\usepackage{url}
\usepackage{epstopdf}
\usepackage{comment}
\usepackage{tikz}


\numberwithin{equation}{section}
\usepackage[margin=2.6cm]{geometry}

\allowdisplaybreaks

\theoremstyle{plain}
\newtheorem{Th}{Theorem}[section]
\newtheorem{Lemma}[Th]{Lemma}

 \theoremstyle{definition}

\newtheorem{?}[Th]{Problem}

\begin{document}

\title{The Merino--Welsh conjecture is false for matroids}

\author{Csongor Beke}

\address{Trinity College, University of Cambridge, CB2 1TQ, United Kingdom \\
}

\email{bekecsongor@gmail.com}

\author{Gergely K\'al Cs\'aji}

\address{ Institute of Economics, Centre for Economic and Regional Studies, Hungary, H-1097 Budapest, T\'{o}th K\'{a}lm\'{a}n u. 4\\ 
}

\email{csaji.gergely@krtk.hun-ren.hu}

\author[P. Csikv\'ari]{P\'{e}ter Csikv\'{a}ri}

\address{HUN-REN Alfr\'ed R\'enyi Institute of Mathematics, H-1053 Budapest Re\'altanoda utca 13-15 \and ELTE: E\"{o}tv\"{o}s Lor\'{a}nd University \\ Mathematics Institute, Department of Computer
Science \\ H-1117 Budapest
\\ P\'{a}zm\'{a}ny P\'{e}ter s\'{e}t\'{a}ny 1/C}

\email{peter.csikvari@gmail.com}

\author{S\'ara Pituk}

\address{ELTE: E\"{o}tv\"{o}s Lor\'{a}nd University \\ H-1117 Budapest
\\ P\'{a}zm\'{a}ny P\'{e}ter s\'{e}t\'{a}ny 1/C}

\email{pituksari@gmail.com}

\thanks{P\'eter Csikv\'ari is supported by the MTA-R\'enyi Counting in  Sparse Graphs ``Momentum'' Research Group and by the Dynasnet ERC Synergy
project (ERC-2018-SYG 810115). Gergely K\'al Cs\'aji is supported by the Hungarian Scientific Research Fund, OTKA, Grant No. K143858, by the Momentum Grant of the Hungarian Academy of Sciences, grant number 2021-1/2021 and by the Ministry of Culture and Innovation of Hungary from the National Research, Development and Innovation fund, financed under the KDP-2023 funding scheme (grant number C2258525). S\'ara Pituk is supported by the Ministry of Culture and Innovation and the National Research, Development and Innovation Office within the Quantum Information National Laboratory of Hungary (Grant No. 2022-2.1.1-NL-2022-00004)}

 \subjclass[2010]{Primary: 05C30. Secondary: 05C31, 05C70}

 \keywords{Tutte polynomial, Merino--Welsh conjecture}

\begin{abstract} The matroidal version of the Merino--Welsh conjecture states that the Tutte polynomial $T_M(x,y)$ of any matroid $M$ without loops and coloops satisfies that
$$\max(T_M(2,0),T_M(0,2))\geqslant T_M(1,1).$$
Equivalently, if the Merino--Welsh conjecture is true for all matroids without loops and coloops, then the following inequalities are also satisfied for all matroids without loops and coloops:
$$T_M(2,0)+T_M(0,2)\geqslant 2T_M(1,1),$$
and 
$$T_M(2,0)T_M(0,2)\geqslant T_M(1,1)^2.$$
We show a counter-example for these inequalities.
\end{abstract}

\maketitle

\section{Introduction}
For a connected graph $G$, let $\tau(G)$, $\alpha(G)$ and $\alpha^*(G)$ denote the number of spanning trees, the number of acyclic orientations and the number of strongly connected orientations, respectively. Merino and Welsh \cite{merino1999forests} conjectured that if $G$ is a connected graph without loops and bridges, then 
$$\max(\alpha(G),\alpha^*(G))\geqslant \tau(G).$$
Note that $\alpha(G),\alpha^*(G)$, and $\tau(G)$ are all evaluations of the Tutte polynomial, namely, $T_G(2,0)=\alpha(G)$, $T_G(0,2)=\alpha^*(G)$, and $T_G(1,1)=\tau(G)$, where the Tutte polynomial $T_G(x,y)$ is defined as
$$T_G(x,y)=\sum_{A\subseteq E}(x-1)^{k(A)-k(E)}(y-1)^{k(A)+|A|-v(G)},$$
with $k(A)$ denoting the number of connected components of the graph $(V,A)$, see \cite{tutte1954contribution}.
There is a vast amount of literature on the properties of the Tutte polynomial and its applications, for instance, \cite{brylawski1992tutte,crapo1969tutte,ellis2011graph,welsh1999tutte}, or the book \cite{ellis2022handbook}.

Conde and Merino \cite{conde2009comparing} also suggested the following ``additive'' and ``multiplicative'' versions of the conjecture:
$$T_G(2,0)+T_G(0,2)\geqslant 2T_G(1,1),$$
and 
$$T_G(2,0)T_G(0,2)\geqslant T_G(1,1)^2.$$
It is easy to see that the multiplicative version implies the additive version which in turn implies the maximum version.

The Merino--Welsh conjecture and its variants triggered considerable attention. Thomassen \cite{thomassen2010spanning} proved that the conjecture is true if the graph $G$ is sufficiently sparse or sufficiently dense. Lin \cite{lin2013note} proved it for $3$-connected graphs satisfying certain degree conditions. Noble and Royle \cite{noble2014merino} proved the multiplicative version for series-parallel graphs. 

The Tutte polynomial naturally extends to matroids. Recall that a matroid $M$ is a pair $(E,\mathcal{I})$ such that $\mathcal{I}\subseteq 2^{E}$, called the independent sets,  satisfying the axioms 
(i) $\emptyset \in \mathcal{I}$, (ii) if $A'\subseteq A\in \mathcal{I}$, then $A'\in \mathcal{I}$, and (iii) if $A,B\in \mathcal{I}$ such that $|B|<|A|$, then there exists an $x\in A\setminus B$ such that $B\cup \{x\} \in \mathcal{I}$. Given a set $S\subseteq E$, the maximal independent subsets of $S$ all have the same cardinality, and this cardinality is called the rank of the matroid, denoted by $r(S)$. The maximum size independent sets of $M$ are called bases, and their set is denoted by $\mathcal{B}(M)$. The dual of a matroid $M$ is the matroid  $M^*$ whose bases are $\{E\setminus B\ |\ B\in \mathcal{B}(M) \}$. For further details on matroids, see for instance \cite{oxley1992matroid}

Given a graph $G=(V,E)$, the edge sets of the spanning forests of $G$ form the independent sets of a matroid $M_G$ called the cycle matroid of $G$. If $G$ is connected, then the basis of $M_G$ are the spanning trees of $G$. One can define the Tutte polynomial of a matroid 
 as $$T_M(x,y)=\sum_{S\subseteq E}(x-1)^{r(E)-r(S)}(y-1)^{|S|-r(S)},$$
where $r(S)$ is the rank of a set $S\subseteq E$. When $M=M_G$, then $T_{M_G}(x,y)=T_G(x,y)$. A loop in a matroid $M$ is an element $x\in E$ such that $r(\{x\})=0$, that is, $\{x\}\notin \mathcal{I}$, and a coloop is an element that is a loop in the dual $M^*$ of the matroid $M$. Equivalently, a coloop is an element that is in every base of $M$. For a cycle matroid $M_G$, loops correspond to loop edges and coloops correspond to bridges in the graph $G$.

Hence it was suggested that the inequalities  
$$\max(T_M(2,0),T_M(0,2)\geqslant T_M(1,1),$$
$$T_M(2,0)+T_M(0,2)\geqslant 2T_M(1,1),$$
$$T_M(2,0)T_M(0,2)\geqslant T_M(1,1)^2$$
may hold true for all matroids $M$ without loops and coloops. (These versions appear explicitly in \cite{ferroni2023merino}, but were treated much earlier without explicitly calling them conjectures.) 
 Note that for general matroids, all these versions are equivalent in the following sense: if one of them is true for all matroids, then the others are also true for all matroids. Applying the maximum version to $M\oplus M^*$ with $M^*$ being the dual of $M$ leads to the multiplicative version of the conjecture. (Here $M\oplus N$ denotes the disjoint union of the matroids $M$ and $N$.)

Knauer, Mart\'inez-Sandoval, and  Ram\'irez Alfons\'in \cite{knauer2018tutte} proved that the class of lattice path matroids satisfies the multiplicative version. Iba{\~n}ez, Merino and Rodr\'iguez \cite{merino2009note} proved the maximum version for some families of graphs and matroids. Ch\'avez-Lomel\'i, Merino, Noble and Ram\'irez-Ib\'a{\~n}ez \cite{chavez2011some} proved the additive version for paving matroids without coloops. In fact, they showed that the polynomial $T_M(x,2-x)$ is convex on the interval $[0,2]$ for these matroids. 
Recently, Ferroni and Schr\"oter \cite{ferroni2023merino} proved the multiplicative version of the conjecture for split matroids.
 Kung \cite{kung2021inconsequential} proved the additive version for some special matroids based on their size and rank. Jackson \cite{jackson2010inequality} proved that
$$T_M(3,0)T_M(0,3)\geqslant T_M(1,1)^2$$
for matroids without loops and coloops.
He phrased it for graphs but he also noted that his proof extends to matroids.

The aim of this short note is to give a counter-example for these inequalities for general matroids.

\begin{Th} \label{counter example}
There are infinitely many matroids $M$ without loops and coloops for which 
$$T_M(2,0)T_M(0,2)<T_M(1,1)^2.$$
\end{Th}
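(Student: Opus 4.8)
The plan is to produce one explicit loopless and coloopless matroid $M_0$ satisfying the strict inequality $T_{M_0}(2,0)\,T_{M_0}(0,2) < T_{M_0}(1,1)^2$, and then to deduce the full statement by taking direct sums.

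For the amplification step, recall that the Tutte polynomial is multiplicative under direct sums, $T_{M \oplus N}(x,y) = T_M(x,y)\,T_N(x,y)$, which is immediate from the rank-generating definition of $T$ since the rank of a subset of $E(M)\sqcup E(N)$ splits as the sum of the ranks of its two parts. Hence, writing $M_k := M_0^{\oplus k}$ for the direct sum of $k$ copies of $M_0$, one has $T_{M_k}(x,y) = T_{M_0}(x,y)^k$, and therefore
$$T_{M_k}(2,0)\,T_{M_k}(0,2) = \big(T_{M_0}(2,0)\,T_{M_0}(0,2)\big)^{k} < \big(T_{M_0}(1,1)^{2}\big)^{k} = T_{M_k}(1,1)^{2}$$
for every $k \geqslant 1$. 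An element of $M_k$ is a loop (resp.\ a coloop) if and only if it is a loop (resp.\ a coloop) of the copy of $M_0$ containing it, so $M_k$ is again loopless and coloopless; and since $|E(M_k)| = k\,|E(M_0)|$, the matroids $M_1, M_2, \dots$ are pairwise non-isomorphic. This already yields infinitely many counterexamples, so the whole problem reduces to exhibiting $M_0$.

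Finding $M_0$ is the heart of the matter. The inequality $T_{M_0}(1,1)^{2} > T_{M_0}(2,0)\,T_{M_0}(0,2)$ asks for a matroid whose number of bases $T_{M_0}(1,1)$ is large compared with the geometric mean of the two ``boundary'' values $T(2,0)$ and $T(0,2)$; since those two are interchanged by matroid duality, $M_0$ should be distinctly non-self-dual, and it cannot be uniform, paving, or sparse paving (nor, by duality, a dual of such), because for those classes the multiplicative version is already known --- in particular $M_0$ must be a non-split matroid, by the theorem of Ferroni and Schr\"oter quoted in the introduction. A natural way to search is to start from a well-understood matroid --- say a uniform matroid $U_{r,n}$ --- and to modify it by local moves that act on the triple $(T(2,0),\,T(0,2),\,T(1,1))$ in a controlled, essentially affine way: relaxing a circuit-hyperplane (turning it into a basis) shifts this triple by $(+2,+2,+1)$, and replacing the $r$-subsets of a set of size $m$ by a single rank-$(r-1)$ flat shifts it by an explicit amount depending only on $m$ and $r$. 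One then optimizes the resulting quadratic inequality over the parameters, or performs an exhaustive computer search over small matroids. Once a candidate $M_0$ is at hand, it remains only to compute $T_{M_0}(2,0)$, $T_{M_0}(0,2)$ and $T_{M_0}(1,1)$ from the rank-generating function (or by deletion-contraction), to check that $M_0$ has no loops and no coloops, and to verify the single numerical inequality.

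I expect this construction step to be the only genuine obstacle. The ``cheap'' candidates provably fail: for a uniform matroid, and more generally for any matroid obtained from a uniform one by relaxing circuit-hyperplanes, a direct computation shows that $T(2,0)\,T(0,2)$ stays strictly above $T(1,1)^{2}$ --- morally because forcing $T(2,0)$ to be small makes the matroid sparse, which drives $T(0,2)$ up exponentially, while the number of bases cannot compensate. Thus one is forced into the realm of genuinely more complicated (in particular non-split) matroids, and all the content of the proof lies in pinning down one such matroid and checking the inequality.
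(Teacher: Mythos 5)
Your reduction step is correct: Tutte polynomials multiply under direct sums, direct sums preserve looplessness and cooplessness, and the $M_0^{\oplus k}$ for $k \geqslant 1$ are pairwise non-isomorphic by size, so one strict counterexample $M_0$ indeed yields infinitely many. You are also right that $M_0$ cannot be uniform, paving, or split. But the argument stops precisely where the content begins: you never actually produce $M_0$. The last two paragraphs describe a search strategy (circuit-hyperplane relaxations, flat contractions, computer search) without carrying it out, and you even assert that the ``cheap'' uniform-derived candidates you consider ``provably fail,'' which risks steering a reader away from the correct construction.

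The paper's $M_0$ is in fact extremely close to a uniform matroid: it is the $2$-thickening $U^{(2)}_{n,r}$, obtained by replacing each element of $U_{n,r}$ with two parallel copies, with $r = 2n/3$. The key tool is the thickening formula $T_{M^{(k)}}(x,y)=(1+y+\dots+y^{k-1})^{r(M)}\,T_M\bigl(\tfrac{x+y+\dots+y^{k-1}}{1+y+\dots+y^{k-1}},\,y^k\bigr)$, which specializes to $T_{M^{(2)}}(x,0)=T_M(x,0)$, $T_{M^{(2)}}(0,x)=(x+1)^{r(M)}T_M(\tfrac{x}{x+1},x^2)$, and $T_{M^{(2)}}(1,1)=2^{r(M)}T_M(1,1)$. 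Combined with the closed form for $T_{U_{n,r}}$, one checks numerically that $U^{(2)}_{33,22}$ already gives $T_M(2,0)\,T_M(0,2)/T_M(1,1)^2 \approx 0.815 < 1$; and an asymptotic estimate (Stirling on the dominating binomial term) shows the ratio tends to $0$ exponentially along $n$ divisible by $3$, so the paper gets infinitely many counterexamples directly without needing your direct-sum amplification. So the gap in your proposal is not one of logic but of substance: the existence of $M_0$ is exactly the nontrivial claim, and the modification of uniform matroids you should have tried is thickening (which adds parallel elements and hence pendant edges to the local basis exchange graphs), not circuit-hyperplane relaxation.
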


In fact, we show the following slightly stronger result. Let $x_0$ be the largest root of the polynomial $x^3-9(x-1)$. We have $x_0\approx 2.2268...$

\begin{Th} \label{threshold} If $0\leqslant x<x_0$, then there are infinitely many matroids $M$ without loops and coloops for which 
$$T_M(x,0)T_M(0,x)<T_M(1,1)^2.$$
\end{Th}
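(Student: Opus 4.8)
The plan is to exhibit an explicit infinite family of matroids and show the inequality fails for all $x$ below the threshold $x_0$. The natural candidates are sparse paving matroids, since their Tutte polynomials have clean closed forms. Recall that for the uniform matroid $U_{r,n}$ one can compute $T_{U_{r,n}}(x,y)$ explicitly, and removing a single basis (passing to a sparse paving matroid $U_{r,n}$ minus one basis, or more generally with $d$ pairwise ``far apart'' bases deleted) changes the Tutte polynomial in a controlled, additive way: deleting one basis subtracts a term of the form $(x-1)$ from the coefficient bookkeeping, so $T_{M}(x,y) = T_{U_{r,n}}(x,y) - (\text{correction})$ with the correction depending only on how many circuit-hyperplanes were created. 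I would take $M = M_{n}$ to be a sequence of such sparse paving matroids on $n$ elements of rank roughly $n/2$ (or whatever rank optimizes the eventual ratio), chosen so that the number of deleted bases $d = d(n)$ grows; e.g. the matroids built from a large family of disjoint circuit-hyperplanes, which is known to exist for $n$ large.

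First I would write down $T_{M_n}(2,0)$, $T_{M_n}(0,2)$ and $T_{M_n}(1,1)$ in terms of $n$, $r$, and $d$. The value $T_{M_n}(1,1)$ is the number of bases, which is $\binom{n}{r} - d$. The values $T_{M_n}(2,0) = a(M_n)$ and $T_{M_n}(0,2) = a^*(M_n)$ (the ``$\beta$-like'' invariants counting the analogues of acyclic/totally cyclic orientations) can be read off from the known formulas for $U_{r,n}$ corrected by a multiple of $d$: roughly $T_{M_n}(2,0) = T_{U_{r,n}}(2,0) - d$ and $T_{M_n}(0,2) = T_{U_{r,n}}(0,2) - d$, since each deleted basis contributes the monomial $(x-1)^0(y-1)^0 = 1$ at the relevant spot. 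More generally I expect $T_{M_n}(x,0) = T_{U_{r,n}}(x,0) - d(x-1)$ and symmetrically $T_{M_n}(0,x) = T_{U_{r,n}}(0,x) - d(x-1)$. Then the desired inequality
$$ T_{M_n}(x,0)\,T_{M_n}(0,x) < T_{M_n}(1,1)^2 $$
becomes a polynomial inequality in $n, r, d, x$.

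Second, I would push $d$ as large as the sparse-paving constraint allows (the maximum number of disjoint circuit-hyperplanes in $U_{r,n}$, which for the right choice of $r$ is of order $\binom{n}{r}/n$ or better, and asymptotically a constant fraction of $\binom{n}{r}$ for suitable parameters by known constructions). In the limit, the dominant balance should reduce — after dividing through by an appropriate power — to comparing something like $x^{3}$ against $9(x-1)$, which is exactly where the cubic $x^3 - 9(x-1)$ and its largest root $x_0 \approx 2.2268$ enter. Concretely, I expect that with $r$ and $d$ chosen optimally the ratio $T_{M_n}(x,0)T_{M_n}(0,x)/T_{M_n}(1,1)^2$ tends to a limit that is $< 1$ precisely when $x^3 < 9(x-1)$, i.e. $0 \le x < x_0$. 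Taking $x=2$ then recovers Theorem \ref{counter example} as a special case.

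The main obstacle I anticipate is twofold: (1) pinning down the exact correction formula for $T_M(x,0)$ and $T_M(0,x)$ when several circuit-hyperplanes are deleted — one must verify the corrections genuinely add (no interaction terms) as long as the circuit-hyperplanes are disjoint, which requires care with the corank-nullity expansion; and (2) choosing $r = r(n)$ and the size $d = d(n)$ of the deleted family so that the asymptotic ratio actually dips below $1$ for the full range $[0,x_0)$ rather than just near $x=2$ — this is an optimization over two parameters and getting the threshold to come out exactly as the root of $x^3 - 9(x-1)$ is the delicate part. I would handle (1) by a direct deletion–contraction or a direct count of subsets, and (2) by a Stirling-type asymptotic analysis of the resulting rational function in $n$, optimizing the limiting constant.
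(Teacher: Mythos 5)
Your approach cannot work, for a reason that is already cited in the paper: sparse paving matroids are split matroids, and Ferroni and Schr\"oter proved the \emph{multiplicative} Merino--Welsh inequality $T_M(2,0)T_M(0,2)\geqslant T_M(1,1)^2$ for all split matroids. So no sparse paving matroid can serve as a counter-example even at $x=2$, and certainly not for all $x<x_0$. This is a fatal obstruction to the whole plan, not a technical gap.

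Here is where the asymptotics actually go wrong. Your correction formula is slightly off --- tightening a basis of $U_{r,n}$ into a circuit-hyperplane changes the Tutte polynomial by $+\bigl[(x-1)(y-1)-1\bigr]$, so with $d$ pairwise-far circuit-hyperplanes one gets $T_M(x,0)=T_{U_{r,n}}(x,0)-dx$ and $T_M(0,x)=T_{U_{r,n}}(0,x)-dx$, not $-d(x-1)$ --- but the real problem is the size of $d$. Circuit-hyperplanes of a sparse paving matroid form an independent set in the Johnson graph $J(n,r)$, equivalently a constant-weight code of weight $r$ and minimum distance $4$. The sphere-packing bound then gives $d\leqslant \binom{n}{r}/\bigl(1+r(n-r)\bigr)=O\bigl(\binom{n}{r}/n^2\bigr)$ for $r$ linear in $n$. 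Your assertion that $d$ can be ``asymptotically a constant fraction of $\binom{n}{r}$'' is false. Since $T_{U_{r,n}}(x,0)$, $T_{U_{r,n}}(0,x)$ and $\binom{n}{r}$ all grow like $e^{nH(\alpha)}$ up to polynomial factors only when $\alpha$ is tuned so the exponential rates agree (and otherwise the uniform matroid ratio already blows up exponentially), a correction of relative size $O(n^{-2})$ cannot drag a ratio that is $\Omega(\mathrm{poly}(n))$ below $1$. There is no choice of $r(n)$ and $d(n)$ that makes this work, and the threshold $x_0$ will not emerge from this family.

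The paper takes a genuinely different route: it applies the $2$-thickening operation to the uniform matroid $U_{n,2n/3}$. Thickening is not a ``small perturbation'' of $U_{n,r}$ --- it changes the ground set size and the Tutte polynomial in a multiplicative way via the identity $T_{M^{(2)}}(x,y)=(y+1)^{r(M)}T_M\bigl(\tfrac{y+x}{y+1},y^2\bigr)$, so in particular $T_{M^{(2)}}(x,0)=T_M(x,0)$ stays fixed while $T_{M^{(2)}}(0,x)$ and $T_{M^{(2)}}(1,1)$ are inflated by different exponential factors. Optimizing the rank ratio $\alpha=2/3$ then makes the exponential rate of $T_M(1,1)^2/\bigl(T_M(x,0)T_M(0,x)\bigr)$ equal to $9(x-1)/x^3$, which is exactly what produces the cubic $x^3-9(x-1)$ and its root $x_0$. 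This kind of asymmetric exponential distortion is impossible to achieve by deleting a polynomially small fraction of bases from $U_{r,n}$. If you want to salvage intuition from your proposal, the correct lesson is that you need an operation that changes the three evaluations at genuinely different exponential rates, and thickening does that; tightening does not.
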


It is interesting to compare this result with the above inequality of Jackson.
In the paper \cite{beke2023permutation}, the authors of this paper show that $3$ can be improved to $2.9243$.
\bigskip

\textbf{Organization of the paper.}
In the next section, we prove Theorems~\ref{counter example} and \ref{threshold}. Then we give some insight into where the counter-example came from. We end the paper with some concluding remarks.

\section{Counter-examples}

The counter-example for the multiplicative version of the Merino--Welsh conjecture is surprisingly simple. Let $U_{n,r}$ be the uniform matroid on $n$ elements with rank $r$. Let $U^{(2)}_{n,r}$ be the $2$-thickening of $U_{n,r}$, that is, we replace each element of $U_{n,r}$ with $2$ parallel elements. We will show that if $x<x_0$, then $M_n=U^{(2)}_{n,\frac{2}{3}n}$ satisfies the theorem for large enough $n$ if $n$ is divisible by $3$, hence concluding Theorems~\ref{threshold} and \ref{counter example}.

The computation of the Tutte polynomial of $U^{(2)}_{n,r}$ relies on two well-known lemmas.

\begin{Lemma}[Formula (2.24) in \cite{merino2012tutte}]
The Tutte polynomial of the matroid $U_{n,r}$ is the following:
$$T_{U_{n,r}}(x,y)=\sum_{i=1}^r\binom{n-i-1}{n-r-1}x^i+\sum_{j=1}^{n-r}\binom{n-j-1}{r-1}y^j$$
if $0<r<n$, and $T_{U_{n,n}}(x,y)=x^n$ and $T_{U_{n,0}}(x,y)=y^n$.
\end{Lemma}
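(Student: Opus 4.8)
The plan is to compute $T_{U_{n,r}}$ straight from the corank--nullity definition. The rank function of $U_{n,r}$ is $r(S)=\min(|S|,r)$, so when $0<r<n$ we have $r(E)=r$, and the summand $(x-1)^{r(E)-r(S)}(y-1)^{|S|-r(S)}$ depends on $S$ only through $k=|S|$: it equals $(x-1)^{r-k}$ for $k\leqslant r$ and $(y-1)^{k-r}$ for $k\geqslant r$. Grouping the $\binom{n}{k}$ subsets of each size $k$, and subtracting once the $k=r$ term (which equals $1$) since it was counted in both ranges, gives
$$T_{U_{n,r}}(x,y)=\sum_{k=0}^{r}\binom{n}{k}(x-1)^{r-k}+\sum_{k=r}^{n}\binom{n}{k}(y-1)^{k-r}-\binom{n}{r}.$$
The boundary cases $r=0$ and $r=n$ are immediate from the definition.

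Next I would expand the powers of $x-1$ with the binomial theorem, exchange the order of summation, and read off the coefficient of $x^{i}$ for $i\geqslant 1$; after the substitution $m=r-k$ this coefficient is $\sum_{m=i}^{r}\binom{n}{r-m}\binom{m}{i}(-1)^{m-i}$, so everything comes down to the identity
$$\sum_{m=i}^{r}\binom{n}{r-m}\binom{m}{i}(-1)^{m-i}=\binom{n-i-1}{n-r-1}.$$
This follows from a short generating-function computation: since $\sum_{m\geqslant i}\binom{m}{i}(-1)^{m-i}x^{m}=x^{i}(1+x)^{-(i+1)}$, the left-hand side is the coefficient of $x^{r}$ in $(1+x)^{n}\cdot x^{i}(1+x)^{-(i+1)}=x^{i}(1+x)^{n-i-1}$, which is $\binom{n-i-1}{r-i}=\binom{n-i-1}{n-r-1}$. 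The analogous statement for the $y$-coefficients then either comes out of the same computation or, faster, from $U_{n,r}^{*}=U_{n,n-r}$ together with the duality $T_{M^{*}}(x,y)=T_{M}(y,x)$. It remains to see that there is no constant term, and indeed setting $x=y=0$ above and using partial alternating binomial sums yields $\binom{n-1}{r}+\binom{n-1}{r-1}-\binom{n}{r}=0$, consistent with $U_{n,r}$ being loopless and coloopless for $0<r<n$.

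The only genuine step is the binomial identity, and since it reduces to extracting a coefficient from $x^{i}(1+x)^{n-i-1}$ there is no real obstacle. A deletion--contraction argument is also available and avoids binomial identities altogether: for $0<r<n$ no element of $U_{n,r}$ is a loop or a coloop, so $T_{U_{n,r}}=T_{U_{n-1,r}}+T_{U_{n-1,r-1}}$, and one checks that the claimed closed form obeys this recursion (which in turn reduces to Pascal's rule) and has the correct values at $r\in\{0,n\}$; the only mild care needed there is at $r=1$ and $r=n-1$, where one of the two smaller uniform matroids is free or all loops and the special formulas $x^{n}$, $y^{n}$ must be reconciled with the generic one.
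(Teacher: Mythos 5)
Your proof is correct, and the computation checks out at every step: the reduction to a size-graded sum via $r(S)=\min(|S|,r)$, the coefficient extraction giving $\sum_{m=i}^{r}\binom{n}{r-m}\binom{m}{i}(-1)^{m-i}$, the generating-function evaluation $[x^{r}]\bigl(x^{i}(1+x)^{n-i-1}\bigr)=\binom{n-i-1}{r-i}=\binom{n-i-1}{n-r-1}$, the duality $T_{U_{n,r}}(x,y)=T_{U_{n,n-r}}(y,x)$ for the $y$-coefficients, and the vanishing of the constant term via $\binom{n-1}{r}+\binom{n-1}{r-1}-\binom{n}{r}=0$ are all right. Note, however, that the paper does not prove this lemma at all; it is quoted as Formula (2.24) from the cited survey of Merino et al., so there is no in-paper argument to compare against. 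Your write-up therefore supplies a proof the paper omits. Of your two routes, the deletion--contraction one (using $T_{U_{n,r}}=T_{U_{n-1,r}}+T_{U_{n-1,r-1}}$ for $0<r<n$ and Pascal's rule) is the more standard textbook argument and also directly explains why the formula has no constant term and no mixed $x^iy^j$ monomials; the corank--nullity expansion you carry out in full is a bit more computational but has the advantage of being entirely self-contained and of exhibiting the two partial sums whose asymptotics the paper goes on to exploit in the next lemma.
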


\begin{Lemma}[Jaeger, Vertigan and Welsh \cite{jaeger1990computational}, formula (3.47) of \cite{merino2012tutte}] \label{thick} {\ \\}
Let $M$ be a matroid, and let $M^{(k)}$ be its $k$-thickening, that is, we replace each element of $M$ with $k$ parallel elements. Then
$$T_{M^{(k)}}(x,y)=(y^{k-1}+y^{k-2}+\dots +1)^{r(M)}T_M\left(\frac{y^{k-1}+y^{k-2}+\dots +y+x}{y^{k-1}+y^{k-2}+\dots +y+1},y^k\right).$$
\end{Lemma}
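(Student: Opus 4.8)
The plan is to prove the identity by a direct manipulation of the subset-sum definition of the Tutte polynomial, grouping the subsets of the thickened ground set by their ``support'' in the original ground set. Write $E$ for the ground set of $M$ and $r=r(M)=r_M(E)$ for its rank, and let $E^{(k)}$ be the ground set of $M^{(k)}$, which consists of $k$ parallel copies $e^{(1)},\dots,e^{(k)}$ of each $e\in E$. Let $\pi\colon 2^{E^{(k)}}\to 2^E$ be the support map, $\pi(S)=\{e\in E: e^{(i)}\in S\text{ for some }i\}$. The structural fact I would use is that the rank function of $M^{(k)}$ satisfies $r_{M^{(k)}}(S)=r_M(\pi(S))$ for every $S\subseteq E^{(k)}$; this is immediate from the definition of a parallel extension, since a nonempty family of parallel copies of $e$ contributes exactly what $e$ itself contributes to any rank computation (whether or not $e$ is a loop of $M$). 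In particular $r_{M^{(k)}}(E^{(k)})=r$.

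First I would expand $T_{M^{(k)}}(x,y)=\sum_{S\subseteq E^{(k)}}(x-1)^{r-r_{M^{(k)}}(S)}(y-1)^{|S|-r_{M^{(k)}}(S)}$ and collect terms according to $A=\pi(S)$. Every $S$ with $\pi(S)=A$ has $r_{M^{(k)}}(S)=r_M(A)$, and such an $S$ is specified by choosing a nonempty subset of the $k$ copies of $e$ for each $e\in A$ and the empty set for each $e\notin A$. Consequently
$$\sum_{S:\pi(S)=A}(y-1)^{|S|}=\prod_{e\in A}\sum_{j=1}^{k}\binom{k}{j}(y-1)^{j}=\prod_{e\in A}\bigl((1+(y-1))^{k}-1\bigr)=(y^{k}-1)^{|A|}.$$
Setting $\sigma=\sigma(y):=1+y+\dots+y^{k-1}$, so that $y^{k}-1=(y-1)\sigma$, we have $(y^{k}-1)^{|A|}=(y-1)^{|A|}\sigma^{|A|}$; dividing by $(y-1)^{r_M(A)}$ — legitimate because $|A|\ge r_M(A)$, so the cancellation takes place in the integral domain $\mathbb{Z}[y]$ — gives $\sum_{S:\pi(S)=A}(y-1)^{|S|-r_M(A)}=(y-1)^{|A|-r_M(A)}\sigma^{|A|}$. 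Summing over $A$,
$$T_{M^{(k)}}(x,y)=\sum_{A\subseteq E}\sigma^{|A|}\,(x-1)^{r-r_M(A)}(y-1)^{|A|-r_M(A)}.$$

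Next I would show that the right-hand side of the lemma equals this same sum. With $X=\dfrac{y^{k-1}+\dots+y+x}{y^{k-1}+\dots+y+1}$ and $Y=y^{k}$, an elementary computation gives $X-1=\dfrac{x-1}{\sigma}$ (the numerator of $X$ is $\sigma+(x-1)$) and $Y-1=y^{k}-1=(y-1)\sigma$. Substituting these into $T_M(X,Y)=\sum_{A\subseteq E}(X-1)^{r-r_M(A)}(Y-1)^{|A|-r_M(A)}$ and multiplying by $\sigma^{r}$, each summand acquires a power of $\sigma$ equal to $r-(r-r_M(A))+(|A|-r_M(A))=|A|$, so $\sigma^{r}T_M(X,Y)=\sum_{A\subseteq E}\sigma^{|A|}(x-1)^{r-r_M(A)}(y-1)^{|A|-r_M(A)}$, which is exactly the expression for $T_{M^{(k)}}(x,y)$ found above. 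This proves the lemma.

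The computation involves inverting $y-1$ and $\sigma$, so to keep everything rigorous I would carry out the algebra in the field $\mathbb{Q}(x,y)$, where $y-1$ and $\sigma$ are units, and then note that both sides are genuine polynomials in $x,y$ and hence equal; alternatively one clears denominators throughout and only ever multiplies. There is no deep obstacle here — the one point that deserves care is the accounting of the powers of $\sigma$: the factor $\sigma^{|A|}$ appears on the $M^{(k)}$ side purely from the combinatorial count $(y^{k}-1)^{|A|}=(y-1)^{|A|}\sigma^{|A|}$, whereas on the $M$ side it is assembled from $\sigma^{r}$, from the $\sigma^{-(r-r_M(A))}$ inside $(X-1)^{r-r_M(A)}$, and from the $\sigma^{|A|-r_M(A)}$ inside $(Y-1)^{|A|-r_M(A)}$; the fact that these combine to precisely $\sigma^{|A|}$ is the heart of the identity, and checking it amounts to adding exponents.
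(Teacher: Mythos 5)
Your argument is correct: grouping the subsets of the thickened ground set by their support, using $r_{M^{(k)}}(S)=r_M(\pi(S))$, and observing that the fibre sum $\sum_{j=1}^k\binom{k}{j}(y-1)^j=(y^k-1)=(y-1)\sigma$ produces exactly the factor $\sigma^{|A|}$ that the substitution $X-1=(x-1)/\sigma$, $Y-1=(y-1)\sigma$ reproduces after multiplying by $\sigma^{r}$; the bookkeeping of exponents checks out, and the passage through $\mathbb{Q}(x,y)$ is harmless since all combined exponents of $\sigma$ and $y-1$ are non-negative. Note that the paper itself gives no proof of this lemma --- it is quoted from Jaeger--Vertigan--Welsh and from formula (3.47) of the Merino et al.\ survey --- so there is nothing to compare against; your derivation is the standard elementary one via the corank--nullity expansion and would serve as a self-contained proof.
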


\bigskip

\noindent By Lemma~\ref{thick} we have
$$T_{M^{(2)}}(x,0)=T_M(x,0),$$
$$T_{M^{(2)}}(0,x)=(x+1)^{r(M)}T_M\left(\frac{x}{x+1},x^2\right),$$
and
$$T_{M^{(2)}}(1,1)=2^{r(M)}T_M(1,1).$$
Clearly, these expressions together with the exact formula for $T_{U_{n,r}}(x,y)$ make the computation of $T_{U^{(2)}_{n,r}}(x,y)$ very fast for specific values of $n,r,x,y$. 

The matroid with the smallest number of elements that we are aware of being a counter-example to the multiplicative version of the Merino--Welsh conjecture is $M=U^{(2)}_{33,22}$ with $66$ elements. For this matroid, we have
$T_{M}(2,0)=8374746166$, 
$T_{M}(0,2)=64127582356390782814$, 
$T_{M}(1,1)=811751838842880$,
and
$$\frac{T_{M}(2,0)T_{M}(0,2)}{T_{M}(1,1)^2}\approx 0.815...$$
\bigskip

To prove Theorem~\ref{threshold}, our next goal is to understand the exponential growth of $T_{U_{n,r}}(x,0)$.

\begin{Lemma} Let $r=n\alpha$ and $x>1$, then
$$T_{U_{n,r}}(x,0)=\begin{cases} f(n) \exp(nH(\alpha)) & \mathrm{if}\  x<\frac{1}{\alpha},\\
f(n)(x(x-1)^{\alpha-1})^n & \mathrm{if}\  x\geqslant \frac{1}{\alpha},
\end{cases}$$
where $n^K>f(n)>n^{-K}$ for some fixed $K$, and $H(\alpha)=-\alpha \ln(\alpha)-(1-\alpha)\ln(1-\alpha)$.
\end{Lemma}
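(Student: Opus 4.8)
The plan is to analyze the sum $T_{U_{n,r}}(x,0)=\sum_{i=1}^{r}\binom{n-i-1}{n-r-1}x^i$ directly by locating its dominant term. Writing $i=\beta n$ and using the standard estimate $\binom{an}{bn}=g(n)\exp(nH(b/a)\cdot a)$ with $g$ sub-exponential, the $i$-th summand behaves like $\exp\bigl(n\,\varphi(\beta)\bigr)$ up to polynomial factors, where
$$\varphi(\beta)=(1-\beta)H\!\left(\frac{\alpha-\beta}{1-\beta}\right)+\beta\ln x,$$
valid for $0\le\beta\le\alpha$. First I would check that $T_{U_{n,r}}(x,0)$ is, up to a factor $f(n)$ with $n^{-K}<f(n)<n^{K}$, equal to $\exp\bigl(n\max_{0\le\beta\le\alpha}\varphi(\beta)\bigr)$: the upper bound is the trivial ``sum $\le$ (number of terms) $\times$ max term'', and the lower bound is ``sum $\ge$ max term,'' with the binomial estimates contributing only polynomial error since $x>1$ keeps everything positive.

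Next I would carry out the one-variable optimization of $\varphi$ on $[0,\alpha]$. Differentiating and simplifying (the derivative of $(1-\beta)H((\alpha-\beta)/(1-\beta))$ works out to $\ln\frac{1-\alpha}{1-\beta}$, a routine computation), the stationarity condition becomes $\frac{1-\alpha}{1-\beta}=\frac1x$, i.e. $\beta^*=1-x(1-\alpha)$. This interior critical point lies in $(0,\alpha)$ exactly when $x<\tfrac1{1-\alpha}$... but I must be careful: the relevant endpoint is $\beta=\alpha$, not $\beta=1$, and near $\beta=\alpha$ the entropy term degenerates, so I should instead compare the interior value with the boundary value $\varphi(\alpha)=\alpha\ln x+(1-\alpha)\ln(1-\alpha)\cdot$ wait — $H(0)=0$, so $\varphi(\alpha)=\alpha\ln x$, while $\varphi(0)=H(\alpha)$. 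The correct dichotomy is governed by whether the unconstrained maximizer $\beta^*$ exceeds $\alpha$ or not, equivalently whether $x\lessgtr\frac1\alpha$: when $x<\frac1\alpha$ one gets $\beta^*<\alpha$ is false in the right direction — here I would simply plug $\beta^*=1-x(1-\alpha)$ back in and simplify $\exp(n\varphi(\beta^*))$, which collapses to $\bigl(x(x-1)^{\alpha-1}\bigr)^n$ after using $1-\beta^*=x(1-\alpha)$ and $\alpha-\beta^*=(x-1)(1-\alpha)$; and when the critical point falls outside $[0,\alpha]$ (the case $x<\frac1\alpha$, where $\varphi$ is increasing on all of $[0,\alpha]$... or decreasing), the max is attained at an endpoint, giving either $\exp(nH(\alpha))$ or $x^n$, and one checks $H(\alpha)\ge\alpha\ln x$ precisely when $x\le\frac1\alpha$ — indeed $\ln\frac1\alpha\ge\ln x$ combined with concavity. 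Reconciling these sign/direction bookkeeping details to land exactly on the two stated cases with threshold $x=\frac1\alpha$ is the one place demanding care.

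Finally I would confirm the claimed form of the sub-exponential factor $f(n)$: it is a bounded product of Stirling-type corrections $\Theta(n^{\pm1/2})$ from the binomial coefficient and a $\Theta(n)$ factor from summing $O(n)$ comparable terms near the maximizer, so $f(n)$ is trapped between $n^{-K}$ and $n^{K}$ for a suitable constant $K$ depending only on how crude we are willing to be; no effort to optimize $K$ is needed. The main obstacle, as flagged, is not any single estimate but keeping the constrained-optimization casework aligned with the threshold $1/\alpha$ rather than $1/(1-\alpha)$ or $1$, since the interval of optimization is $[0,\alpha]$ and the entropy term is the ``reversed'' one $H\!\left(\frac{\alpha-\beta}{1-\beta}\right)$; a clean way to sidestep confusion is to substitute $\gamma=\beta$ vs. $\gamma=\alpha-\beta$ early and recognize $T_{U_{n,r}}(x,0)$ (up to polynomial factors and the substitution $x\mapsto x$) as a one-sided truncation of the binomial expansion of $(x-1+1)$-type sums, i.e. relate it to $\sum_{i\le r}\binom{n-1}{i}(x-1)^i$-like quantities whose exponential rate is classical.
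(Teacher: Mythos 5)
Your overall strategy—locate the dominant term of $\sum_{i=1}^r\binom{n-i-1}{n-r-1}x^i$ and apply Stirling—is exactly the paper's, just phrased via a continuous Laplace-type optimization of the exponential rate $\varphi(\beta)$ rather than the paper's discrete ratio test on consecutive terms. The setup $\varphi(\beta)=(1-\beta)H\!\left(\frac{\alpha-\beta}{1-\beta}\right)+\beta\ln x$ is correct, as is the reduction of the lemma to finding $\max_{[0,\alpha]}\varphi$ up to polynomial factors.

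However, there is a concrete computational error that propagates through the rest of the argument. Writing $(1-\beta)H\!\left(\frac{\alpha-\beta}{1-\beta}\right)=-(\alpha-\beta)\ln(\alpha-\beta)+(1-\beta)\ln(1-\beta)-(1-\alpha)\ln(1-\alpha)$ and differentiating term by term gives
$$\frac{d}{d\beta}\left[(1-\beta)H\!\left(\tfrac{\alpha-\beta}{1-\beta}\right)\right]=\bigl(\ln(\alpha-\beta)+1\bigr)+\bigl(-\ln(1-\beta)-1\bigr)=\ln\frac{\alpha-\beta}{1-\beta},$$
not $\ln\frac{1-\alpha}{1-\beta}$ as you claim. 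The stationarity condition is therefore $\frac{\alpha-\beta}{1-\beta}=\frac1x$, giving $\beta^*=\frac{x\alpha-1}{x-1}$ (which is exactly the $i=\frac{xr-n}{x-1}$ the paper works with), not $\beta^*=1-x(1-\alpha)$. This matters in two ways. First, the correct $\beta^*$ lies in $(0,\alpha)$ precisely when $x>\frac1\alpha$, which is where the lemma's threshold $\frac1\alpha$ comes from; your $\beta^*$ has threshold $\frac1{1-\alpha}$, and the subsequent hand-waving (``equivalently whether $x\lessgtr\frac1\alpha$'') asserts the right answer without deriving it. Second, your claim that plugging $\beta^*=1-x(1-\alpha)$ into $\varphi$ ``collapses to $(x(x-1)^{\alpha-1})^n$'' is false: with that $\beta^*$ one gets $\exp(\varphi(\beta^*))=x(x-1)^{-(x-1)(1-\alpha)}$, which equals the target $x(x-1)^{\alpha-1}$ only at $x=2$. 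With the correct $\beta^*$ one has $1-\beta^*=\frac{x(1-\alpha)}{x-1}$ and $\alpha-\beta^*=\frac{1-\alpha}{x-1}$, and a short computation then does give $\varphi(\beta^*)=\ln x+(\alpha-1)\ln(x-1)$ as required. So the error is not merely a bookkeeping nuisance you can sidestep; the derivative must be fixed before the casework and the final simplification go through.
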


\begin{proof}
We can determine the dominating term of $T_{U_{n,r}}(x,0)$ by comparing two neighboring terms:
$$\binom{n-i-1}{n-r-1}x^i\geqslant \binom{n-i-2}{n-r-1}x^{i+1}\ \ \ 
\text{if and only if}\ \ \ 
\frac{n-i-1}{r-i}\geqslant x.$$
Hence, $\binom{n-i-1}{n-r-1}x^i$ is maximized at
$\left\lceil \frac{xr-(n-1)}{x-1}\right\rceil$. If the right-hand side is negative, then the dominating term is at $i=1$ and $\binom{n-2}{n-r-1}\sim \binom{n}{n-r}\sim \exp(nH(\alpha))$, where $\sim$ means the estimation is valid up to some $n^{K}$. When $x=\frac{1}{\alpha}$, then $\exp(H(\alpha))=x(x-1)^{\alpha-1}$, so we can assume that $x\geqslant \frac{1}{\alpha}$ in the rest of the proof since then on the whole interval $\left(1,\frac{1}{\alpha}
\right)$ we have $T_{U_{n,r}}(x,0)\sim \exp(nH(\alpha))$.

For the sake of simplicity, we carry out the estimation of the dominating term at $$i=\frac{xr-n}{x-1}=\frac{x\alpha-1}{x-1}n$$ 
and we drop the integer part. All these changes affect our computation up to a term $n^{-K}$. In the forthcoming computation, we also estimate $m!\sim \left(\frac{m}{e}\right)^m$ as the terms $\sqrt{2\pi m}(1+o(1))$ can be integrated into $f(n)$:
\begin{align*}
\binom{n-i-1}{n-r-1}x^i&\sim \binom{n-i}{n-r}x^i\\
&\sim \frac{\left(\frac{n-i}{e}\right)^{n-i}}{\left(\frac{n-r}{e}\right)^{n-r}\left(\frac{r-i}{e}\right)^{r-i}}x^i\\
&=\frac{\left(n\left(1-\frac{x\alpha-1}{x-1}\right)\right)^{n\left(1-\frac{x\alpha-1}{x-1}\right)}}{(n(1-\alpha))^{n(1-\alpha)}\left(n\left(\alpha-\frac{x\alpha-1}{x-1}\right)\right)^{n\left(\alpha-\frac{x\alpha-1}{x-1}\right)}}x^i\\
&= \left(\frac{\left(\frac{x(1-\alpha)}{x-1}\right)^{\frac{x(1-\alpha)}{x-1}}}{(1-\alpha)^{1-\alpha}\left(\frac{1-\alpha}{x-1}\right)^{\frac{1-\alpha}{x-1}}}\right)^nx^i\\
&= \left(x^{\frac{x(1-\alpha)}{x-1}}(x-1)^{\alpha-1}\right)^nx^i\\
&= \left(x^{\frac{x(1-\alpha)}{x-1}}(x-1)^{\alpha-1}\right)^nx^{\frac{x\alpha-1}{x-1}n}\\
&=(x(x-1)^{\alpha-1})^n,
\end{align*}
and the result follows.

\end{proof}

\begin{Lemma}
Let $r=\alpha n$ and assume that $x\geqslant \frac{1}{\alpha}$ and $x^2\geqslant \frac{1}{1-\alpha}$. Then for the matroid $M=U^{(2)}_{n,r}$, we have
$$\frac{T_M(1,1)^2}{T_M(x,0)T_M(0,x)}= g(n)\left(\frac{2^{2\alpha}}{\alpha^{2\alpha}{(1-\alpha)^{2(1-\alpha)}}}\cdot \frac{x-1}{x^3}\right)^n,$$
where $n^K>g(n)>n^{-K}$ for some fixed $K$.
\end{Lemma}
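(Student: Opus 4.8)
The plan is to assemble the three ingredients already made available: the $2$-thickening identities from Lemma~\ref{thick}, the asymptotics of $T_{U_{n,r}}(x,0)$ from the previous lemma, and an analogous asymptotic analysis for the ``$y$-side'' evaluation $T_M(0,x)$. Writing $M = U^{(2)}_{n,r}$ with $r = \alpha n$, the thickening identities give
\begin{align*}
T_M(1,1) &= 2^{r(M)} T_{U_{n,r}}(1,1), \\
T_M(x,0) &= T_{U_{n,r}}(x,0), \\
T_M(0,x) &= (x+1)^{r(M)} T_{U_{n,r}}\!\left(\tfrac{x}{x+1},\, x^2\right),
\end{align*}
and $r(M) = r = \alpha n$. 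So the task reduces to finding the exponential growth rates of $T_{U_{n,r}}(1,1)$, of $T_{U_{n,r}}(x,0)$, and of $T_{U_{n,r}}\!\left(\tfrac{x}{x+1}, x^2\right)$, each up to a polynomial factor $n^{\pm K}$, and then combining.

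First I would handle $T_{U_{n,r}}(1,1)$: this equals the number of bases of $U_{n,r}$, namely $\binom{n}{r} = \binom{n}{\alpha n}$, whose growth is $\exp(n H(\alpha))$ up to $n^{\pm K}$ by Stirling, so $T_M(1,1)^2 \sim 2^{2\alpha n}\exp(2nH(\alpha))$. Next, under the hypothesis $x \ge \tfrac1\alpha$, the previous lemma gives directly $T_M(x,0) = T_{U_{n,r}}(x,0) \sim f(n)\,(x(x-1)^{\alpha-1})^n$. The remaining and only substantive step is the asymptotics of $T_{U_{n,r}}(0,x)$. I would use the exact formula for $T_{U_{n,r}}$, which in the $y$-variable reads $\sum_{j=1}^{n-r}\binom{n-j-1}{r-1} y^j$ evaluated at $y = x^2$, times the prefactor $(x+1)^{\alpha n}$ coming from the thickening. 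The same ``compare neighbouring terms'' argument as in the previous lemma locates the dominant binomial term: $\binom{n-j-1}{r-1} y^j \ge \binom{n-j-2}{r-1} y^{j+1}$ iff $\tfrac{n-j-1}{n-r-j} \ge y$, so the maximising index is around $j = \tfrac{y(n-r) - n}{y - 1}\cdot(1+o(1))$ when this is positive — and the condition for positivity is exactly $y = x^2 \ge \tfrac{1}{1-\alpha}$, which is the second hypothesis of the lemma. Plugging $j = \tfrac{x^2(1-\alpha)-1}{x^2-1}\,n$ into the Stirling estimate for $\binom{n-j-1}{r-1}x^{2j}$ and simplifying (mirroring the displayed computation in the previous lemma, with the roles of ``$r$'' and ``$n-r$'' swapped and $x$ replaced by $x^2$) yields $T_{U_{n,r}}(0, x^2\text{-part}) \sim (x^2 (x^2-1)^{-\alpha})^n$, hence
\[
T_M(0,x) = (x+1)^{\alpha n} \cdot \bigl(x^2 (x^2-1)^{-\alpha}\bigr)^n \cdot f(n)
= f(n)\Bigl( (x+1)^{\alpha}\, x^2\, (x^2-1)^{-\alpha}\Bigr)^n.
\]
Using $x^2 - 1 = (x-1)(x+1)$ this simplifies to $f(n)\,\bigl(x^2 (x-1)^{-\alpha}\bigr)^n$.

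Finally I would multiply:
\[
\frac{T_M(1,1)^2}{T_M(x,0)T_M(0,x)} = g(n)\cdot \frac{2^{2\alpha n}\exp(2nH(\alpha))}{\bigl(x(x-1)^{\alpha-1}\bigr)^n \cdot \bigl(x^2(x-1)^{-\alpha}\bigr)^n},
\]
where $g(n) = n^{\pm K}$ absorbs all the polynomial corrections. The denominator is $\bigl(x^3 (x-1)^{\alpha - 1 - \alpha}\bigr)^n = \bigl(x^3/(x-1)\bigr)^n$, and using $\exp(H(\alpha)) = \alpha^{-\alpha}(1-\alpha)^{-(1-\alpha)}$ the numerator's exponential part is $\bigl(2^{2\alpha}\alpha^{-2\alpha}(1-\alpha)^{-2(1-\alpha)}\bigr)^n$, giving precisely the claimed expression. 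I do not anticipate a genuine obstacle here — the one point requiring care is verifying that the hypotheses $x \ge \tfrac1\alpha$ and $x^2 \ge \tfrac{1}{1-\alpha}$ are exactly what is needed for the dominant term to sit in the ``interior regime'' (positive maximising index) rather than at the boundary $j=1$ or $i=1$, so that the clean closed forms for the growth rates apply; this is the analogue of the case split already dealt with in the preceding lemma, just applied on the $y$-side as well.
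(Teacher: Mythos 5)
Your approach and computation match the paper's proof closely, and the algebraic simplifications you carry out are all correct, but there is a genuine gap at the step where you estimate $T_M(0,x)$. You write $T_M(0,x) = (x+1)^{\alpha n}\, T_{U_{n,r}}\!\left(\frac{x}{x+1},x^2\right)$, but you then evaluate only the $y$-part $\sum_{j=1}^{n-r}\binom{n-j-1}{r-1}(x^2)^j$ of the exact formula and silently drop the $x$-part $\sum_{i=1}^{r}\binom{n-i-1}{n-r-1}\left(\frac{x}{x+1}\right)^i$. Because the Tutte polynomial of a uniform matroid has no cross terms, one has $T_{U_{n,r}}\!\left(\frac{x}{x+1},x^2\right) = T_{U_{n,r}}\!\left(\frac{x}{x+1},0\right) + T_{U_{n,r}}(0,x^2)$, and you must argue that the first summand does not affect the exponential growth rate. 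Without this comparison, your expression for $T_M(0,x)$ is only a lower bound, which gives an upper bound on the ratio $T_M(1,1)^2/\bigl(T_M(x,0)T_M(0,x)\bigr)$ — the wrong direction for proving Theorem~\ref{threshold}, which needs the ratio to be \emph{large}.

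The missing step (which is exactly what the paper supplies) is short: since $\frac{x}{x+1}<1$, we have $T_{U_{n,r}}\!\left(\frac{x}{x+1},0\right) < T_{U_{n,r}}(1,1) \sim \exp\bigl(nH(\alpha)\bigr)$, while under the hypothesis $x^2\geqslant\frac{1}{1-\alpha}$ the preceding lemma (applied with $\alpha$ replaced by $1-\alpha$ and $x$ by $x^2$, via the duality $T_{U_{n,r}}(0,y)=T_{U_{n,n-r}}(y,0)$) gives $T_{U_{n,r}}(0,x^2)\sim\bigl(x^2(x^2-1)^{-\alpha}\bigr)^n \geqslant \exp\bigl(nH(\alpha)\bigr)$. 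Hence the $y$-part dominates up to polynomial factors, and the rest of your derivation goes through. Apart from this omission, your plan reproduces the paper's argument faithfully; the only cosmetic difference is that the paper invokes the previous lemma by duality for the $y$-side estimate, whereas you redo the neighbour-ratio computation directly, which amounts to the same thing.
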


\begin{proof}
We have 
$$T_M(1,1)=2^rT_{U_{n,r}}(1,1))=2^{r}\binom{n}{r}\sim \left(\frac{2^{\alpha}}{\alpha^{\alpha}{(1-\alpha)^{1-\alpha}}}\right)^n.$$
Furthermore, 
$$T_M(x,0)=T_{U_{n,r}}(x,0)\sim (x(x-1)^{\alpha-1})^n$$
as $x\ge\frac{1}{\alpha}$.
Finally,
$$T_M(0,x)=(x+1)^rT_{U_{n,r}}\left(\frac{x}{x+1},x^2\right)=(x+1)^rT_{U_{n,r}}\left(\frac{x}{x+1},0\right)+(x+1)^rT_{U_{n,r}}\left(0,x^2\right).$$
Here, the second term will dominate the first one as $T_{U_{n,r}}\left(\frac{x}{x+1},0\right)<T_{U_{n,r}}(1,1)\sim\exp(nH(\alpha))$, while
$$T_{U_{n,r}}\left(0,x^2\right)=T_{U_{n,n-r}}(x^2,0)\sim (x^2(x^2-1)^{(1-\alpha)-1})^n$$
as $x^2\geqslant \frac{1}{1-\alpha}$. Putting everything together, we get that
\begin{align*}\frac{T_M(1,1)^2}{T_M(x,0)T_M(0,x)}&\sim \left(\frac{2^{2\alpha}}{\alpha^{2\alpha}{(1-\alpha)^{2(1-\alpha)}}}\right)^n(x(x-1)^{\alpha-1}(x+1)^{\alpha}x^2(x^2-1)^{-\alpha})^{-n}\\
&\sim \left(\frac{2^{2\alpha}}{\alpha^{2\alpha}{(1-\alpha)^{2(1-\alpha)}}}\cdot \frac{x-1}{x^3}\right)^{n}.
\end{align*}
\end{proof}

\begin{proof}[Proof of Theorem~\ref{threshold}]
The maximum of the function $\frac{2^{2\alpha}}{\alpha^{2\alpha}{(1-\alpha)^{2(1-\alpha)}}}$ is at $\alpha=\frac{2}{3}$, where it takes value $9$.
We can assume by monotonicity that $2\leqslant x<x_0$. Then $x\geqslant \frac{1}{\alpha}=\frac{3}{2}$ and $x^2\geqslant \frac{1}{1-\alpha}=3$, whence for $M=U^{(2)}_{n,\frac{2}{3}n}$,we get that
$$\frac{T_M(1,1)^2}{T_M(x,0)T_M(0,x)}\geqslant n^{-K}\left(\frac{9(x-1)}{x^3}\right)^n>1$$
for large enough $n$ as $\frac{9(x-1)}{x^3}>1$.
\end{proof}

\section{Intuition behind the counter-examples}

In this section, we try to explain the underlying intuition behind the counter-examples. 

It turns out that the Merino--Welsh conjecture is strongly related to the ``local structure'' of a matroid. To make this statement more precise, we need the concept of the local basis exchange graph. This is a bipartite graph associated with a basis $B\in \mathcal{B}(M)$ of the matroid $M$ whose parts are the elements of $B$ on one side, and the non-elements on the other side. We connect an element $b\in B$ with $c\in E\setminus B$ if $(B\setminus \{b\})\cup \{c\}$ is again a basis. Let us call this bipartite graph $H_M[B]$. It turns out \cite{beke2023permutation} that one can associate a polynomial $\widetilde{T}_{H_M[B]}(x,y)$ to each local basis exchange graph such that 
$$T_M(x,y)=\sum_{B\in \mathcal{B}}\widetilde{T}_{H_M[B]}(x,y).$$
We call the polynomial $\widetilde{T}_{H}(x,y)$ the permutation Tutte polynomial of the graph $H$ in the paper \cite{beke2023permutation}. We do not go into details about the actual definition of this polynomial as we only need one key observation about it: if for all $B\in \mathcal{B}(M)$ we have 
$$\widetilde{T}_{H_M[B]}(2,0)\widetilde{T}_{H_M[B]}(0,2)\geqslant \widetilde{T}_{H_M[B]}(1,1)^2,$$
then
$$T_M(2,0)T_M(0,2)\geqslant T_M(1,1)^2.$$
For the balanced complete bipartite graph $K_{n,n}$, we have
$$\frac{\widetilde{T}_{K_{n,n}}(2,0)\widetilde{T}_{K_{n,n}}(0,2)}{\widetilde{T}_{K_{n,n}}(1,1)^2}\approx \frac{n\pi}{4},$$
which shows that the required inequality holds, but the ratio is not exponential in $n$. Most probably, the same is true for every sufficiently dense balanced bipartite graph. 

It turns out that gluing pendant edges to one side of a bipartite graph may actually decrease the studied ratio. So it is natural to study bipartite graphs that are obtained from balanced complete bipartite graphs by gluing a pendant edge to each vertex on one side. For sufficiently large $n$ these bipartite graphs indeed violate the inequality $\widetilde{T}_{H}(2,0)\widetilde{T}_{H}(0,2)\geqslant \widetilde{T}_{H}(1,1)^2$. 

The next question is whether we can construct a matroid $M$ with the desired local basis exchange graphs. For the matroid $U_{n,r}$, the local basis exchange graph is $K_{r,n-r}$. When we apply the $2$-thickening transformation to any matroid $M$, then we obtain $H_{M^{(2)}}[B]$ from $H_M[B]$ as follows: each vertex $b\in B$ gets a pendant edge, and each $c\in E\setminus B$ gets a twin $c'$, i.e. a vertex that is connected to the same vertices as $c$. So in this way $K_{r,n-r}$ is transformed into $K_{r,2(n-r)}$ with a pendant edge attached to each of the $r$ basis elements. If we choose $r=\frac{2}{3}n$, then we get a balanced complete bipartite graph with a pendant edge attached to each vertex on one side of the graph. This is exactly the bipartite graph that we needed, and we get this graph as the local basis exchange graph associated with every basis.

Let us mention that there are graphical matroids where some of the local basis exchange graphs are isomorphic to the above bipartite graphs, but unfortunately, not all of them. 

We believe that the theory of the permutation Tutte polynomial developed in the paper \cite{beke2023permutation} can attack successfully variations of the Merino--Welsh conjecture, or help identify critical structures.

\section{Concluding remarks}

Ferroni and Schr\"oter \cite{ferroni2022valuative} lists five major open problems concerning invariants of matroids on the fourth page. The fifth one is the matroidal version of the Merino--Welsh conjecture. Interestingly, another conjecture of this list has been disproved too, namely, the Ehrhart positivity of matroids. For this conjecture, Ferroni provided counter-examples \cite{ferroni2022matroids}. This shows that one should be careful with these conjectures, matroids are much more versatile than we may expect them, and small examples may be misleading. The program initiated by Ferroni and Schr\"oter \cite{ferroni2022valuative} generating many examples may lead to more counter-examples for various conjectures.

Though the counter-examples given in this note are not graphical matroids, they still advise caution concerning the Merino--Welsh conjecture. It is also important to note that the three forms of the Merino--Welsh conjecture are not equivalent for graphs, and it may occur, for instance, that the original version is true, while the multiplicative version is false.

\bibliography{references}
\bibliographystyle{plain}

\end{document}